\newtheorem{theorem}{Theorem}[section]
\newtheorem{lemma}[theorem]{Lemma}
\newtheorem{corollary}[theorem]{Corollary}
\newtheorem{proposition}[theorem]{Proposition}
\newtheorem{remark}[theorem]{Remark}
\newtheorem{example}[theorem]{Example}
\begin{document}

\title[Maximal Rigid Representations of Continuous Quivers of Type $A$]
{Maximal Rigid Representations of Continuous Quivers of Type $A$}
\thanks{This work was supported by National Natural Science Foundation of China (No. 11771445)}

\author{Yanxiu Liu}             
\address{School of Science, Beijing Forestry University, Beijing 100083, P. R. China}
\email{liuyanxiu@bjfu.edu.cn (Y.Liu)} 

\author{Minghui ZHAO $^\dag$ }  
\address{School of Science, Beijing Forestry University, Beijing 100083, P. R. China}
\email{zhaomh@bjfu.edu.cn (M.Zhao)}
\thanks{$^\dag$ Corresponding author}

\subjclass[2010]{16G20}

\date{\today}

\keywords{maximal rigid representations, continuous quivers}

\bibliographystyle{abbrv}

\begin{abstract}
Bongartz and Gabriel gave a classification of maximal rigid representations for quivers of type $A$ with linear orientation and counted the number of isomorphism classes. In this paper, we give a formula on the number of isomorphism classes of a kind of maximal rigid representations for continuous quivers of type $A$ introduced by Igusa, Rock and Todorov. 
\end{abstract}

\maketitle

\section{Introduction}
Gabriel gave the notions of quivers and representations in \cite{gabriel1972unzerlegbare}. In \cite{gabriel1973indecomposable}, he gave a classification of indecomposable representations for finite type quivers. 
The first definition of tilting modules was introduced by Brenner and Butler in \cite{brenner2006generalizations}, and the definition used now was given by Happel and Ringel in \cite{happel1982tilted}.
In \cite{bongartz1982covering}, Bongartz and Gabriel gave a classification of tilting representations of quivers of type $A$ with linear orientation, and counted the number of isomorphism classes. Note that a representation is tilting and basic if and only if it is maximal rigid in this case. In \cite{buan2004tilting}, Buan and Krause gave a classification of tilting representations for cyclic quivers.

Representations of quivers of type $A$ play an important role in persistent homology,
which have been widely used in topological data analysis (see \cite{oudot2017persistence}). In \cite{carlsson2010zigzag}, Carlsson
and de Silva introduced zigzag persistent homology.
As a generalization of Gabriel's theorem for quivers of type $A$, Crawley-Boevey
in \cite{crawley2015decomposition} and Botnan in \cite{botnan2017interval} gave a classification of indecomposable representations of
$\mathbb{R}$ and infinite zigzag, respectively.
As a generalization, Igusa, Rock and Todorov introduced continuous quivers of type $A$ and  classified indecomposable representations in \cite{igusa2023continuous}.
In \cite{Sala_Schiffmann2019,Appel_Sala2020,Sala_Schiffmann2021,Appel_Sala_Schiffmann2022},
Appel, Sala and Schiffmann introduced continuum quivers independently. They also introduced the continuum Kac–Moody algebra and continuum quantum group associated to a continuum quiver.

In this paper, we study the number of isomorphism classes of maximal rigid representations of continuous quivers of type $A$. We define the type of representations and give a formula on the number of isomorphsim classes of maximal rigid representation of a given type for a continuous quiver. For a proof of this result, we bulid a connection between maximal rigid representations of continuous quivers and finite quivers.
  
In Section 2, we give notations and main results. Some results on tilting representations of finite quivers are recalled in Section 3. The proofs of main results are given in Section 4.
  
\section{Preliminaries}\label{section-1} 
\subsection{Continuous quivers}
Consider closed interval $[0,1]$ in $\mathbb{R}$ and the normal order $<$ on $[0,1]$. In this paper, $\mathbf{Q}=([0,1],<)$ is called a continuous quiver of type $A$.

\begin{remark}
The continuous quiver $\mathbf{Q}$  is a special case of continuous quivers $A_{\mathbb{R}}= (\mathbb{R}, \mathcal{S}, \preceq)$ of type $A$ in \cite{igusa2023continuous}. In this paper,  $[0,1]$, $\emptyset$ and $<$ are used instead of $\mathbb{R}$, $\mathcal{S}$,  and $\preceq$, respectively.

\end{remark}

Let $k$ be a fixed field. A representation of $\mathbf{Q}$ over $k$ is given by $\mathbf{M}=(\mathbf{M}(a), \mathbf{M}(a, b))$, where $\mathbf{M}(a)$ is a $k$-vector space for any $a \in [0, 1]$, and $\mathbf{M}(a, b):\mathbf{M}(a) \rightarrow \mathbf{M}(b)$ is a $k$-linear map for any $a < b  \in [0, 1]$.  Let $\mathbf{M}=(\mathbf{M}(a), \mathbf{M}(a, b))$ and $\mathbf{N}=(\mathbf{N}(a), \mathbf{N}(a, b))$ be representations of $\mathbf{Q}$. A family of $k$-linear maps $f=(f_a)_{a\in[0, 1]}$ is called a morphism from $\mathbf{M}$ to $\mathbf{N}$, if $f_b\mathbf{M}(a,b)=\mathbf{N}(a,b)f_a$ for any $a < b  \in [0, 1]$. Denote by $\text{Rep}_{k}(\mathbf{Q})$ the category of representations of $\mathbf{Q}$.

 For any $a < b  \in [0, 1]$, we use the notation $|a,b|$ for one of intervals $[a,b]$, $[a,b)$, $(a,b]$ and $(a,b)$.  Denote by $\mathbf{T}_{|a, b|}$ the following representation of $\mathbf{Q}$, where
 $$\mathbf{T}_{|a, b|}(x) = \left\{ 
    \begin{aligned}
    &k, & & x \in |a,b| \cr 
    &0, & & otherwise,
    \end{aligned}
\right.$$
and
$$\mathbf{T}_{|a, b|}(x,y) = \left\{ 
    \begin{aligned}
    &1_k, & &x<y \; and\; x,y \in |a,b| \cr 
    &0, & & otherwise.
    \end{aligned}
\right.$$

Denote by $\text{Rep}'_{k}(\mathbf{Q})$ the subcategory of $\text{Rep}_{k}(\mathbf{Q})$ consisting of representations with the following  decomposition $$\mathbf{M}=\bigoplus_ {i \in I}\mathbf{T}_{|x_i,y_i|}.$$


\subsection{Maximal rigid representations} 


Let $M$ be an object of category $\text{Rep}'_{k}(\mathbf{Q})$ with the following decomposition $$\mathbf{M}=\bigoplus_ {i \in I}\mathbf{M}_{i},$$
where $\mathbf{M}_{i}$ is indecomposable  for all $i\in I$.
The representation $\mathbf{M}$ is called basic if $\mathbf{M}_i \ncong \mathbf{M}_j$ for any $i \neq j \in I$, and 
is called rigid if $\text{Ext}^{1}(\mathbf{M}, \mathbf{M})=0$.  The representation $\mathbf{M}$ is called maximal rigid, if it is basic and rigid, and $\mathbf{M} \oplus \mathbf{N}$ is rigid implies that $\mathbf{N}$ $\in\text{add}\mathbf{M}$ for any object $\mathbf{N}$ of category $\text{Rep}'_{k}(\mathbf{Q})$.

Similarly to Lemma 3.1 in \cite{buan2004tilting}, we have the following lemma.

\begin{lemma}\label{lemma2-2}
Let $\mathbf{M}=\bigoplus\limits_ {i \in I}\mathbf{T}_{|x_i,y_i|}$ be an object of category $\text{Rep}'_{k}(\mathbf{Q})$. The representation $\mathbf{M}$ is rigid if and only if one of the following conditions is satisfied
\begin{enumerate}
    \item $|x_i,y_i|\subseteq|x_j,y_j|$;
    \item $|x_j,y_j|\subseteq|x_i,y_i|$;
    \item $y_i<x_j$;
    \item $y_j<x_i$;
    \item $y_i=x_j$, $|x_i,y_i|=|x_i,y_i)$ and $|x_j,y_j|=(x_j,y_j|$;
    \item $y_j=x_i$, $|x_j,y_j|=|x_j,y_j)$ and $|x_i,y_i|=(x_i,y_i|$,
\end{enumerate}
for any ${i\neq j \in I}$.
\end{lemma}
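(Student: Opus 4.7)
The plan is to first reduce the statement to a pairwise $\text{Ext}$-vanishing condition, then classify all two-interval configurations compatible with that vanishing. Since $\text{Ext}^{1}(-,-)$ is biadditive on direct sums, rigidity of $\mathbf{M} = \bigoplus_{i \in I} \mathbf{T}_{|x_i, y_i|}$ is equivalent to $\text{Ext}^{1}(\mathbf{T}_{|x_i, y_i|}, \mathbf{T}_{|x_j, y_j|}) = 0$ for every ordered pair $i, j \in I$. Each interval module $\mathbf{T}_{|a,b|}$ is a brick, so the diagonal case $i = j$ is automatic and can be checked directly. Thus the lemma reduces to showing that, for two interval modules $\mathbf{T}_{|a,b|}$ and $\mathbf{T}_{|c,d|}$, both $\text{Ext}^{1}$ groups vanish if and only if the pair of supports $|a,b|$, $|c,d|$ satisfies one of the six listed configurations.

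Next I would analyse the short exact sequences
\[
0 \to \mathbf{T}_{|c,d|} \to \mathbf{E} \to \mathbf{T}_{|a,b|} \to 0
\]
with $\mathbf{E} \in \text{Rep}'_{k}(\mathbf{Q})$ via pointwise dimensions. Since $\dim \mathbf{E}(x) = \chi_{|a,b|}(x) + \chi_{|c,d|}(x)$ at every $x \in [0,1]$, the interval decomposition of $\mathbf{E}$ is tightly constrained: either $\mathbf{E} \cong \mathbf{T}_{|a,b|} \oplus \mathbf{T}_{|c,d|}$ (the split case), or $\mathbf{E}$ decomposes as the sum of the interval modules supported respectively on the union and the intersection of the two supports. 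The latter option is available, and produces a genuine non-split extension, precisely when the two intervals overlap in a proper staircase pattern, that is, when one starts strictly inside the other and extends strictly past its end, with endpoint types compatible for gluing into a larger interval module in $\text{Rep}'_{k}(\mathbf{Q})$.

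Finally I would verify that conditions (1)--(6) in the lemma are exactly the complement of these staircase configurations, taken in both directions. Conditions (1) and (2) cover nested intervals; (3) and (4) cover disjoint intervals with a genuine gap; (5) and (6) isolate the delicate boundary case in which the two intervals abut at a common point but their open/closed endpoint types are incompatible for gluing, so that no interval module is supported on the union. In each of the six cases every attempted non-split extension collapses to the split one; conversely, if none of the six holds, the staircase pattern is forced in at least one direction and an explicit non-split extension can be written down.

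The main obstacle will be the bookkeeping in the pairwise analysis: one must enumerate all relative positions of two intervals in $[0,1]$ together with all four open/closed endpoint-type combinations on each side, and verify in each case whether the candidate non-split extension actually lies in $\text{Rep}'_{k}(\mathbf{Q})$. The subtlest subcases are those in which the two intervals share an endpoint, since there the existence of the non-split extension depends sharply on which side contains the shared point; this is exactly what isolates the endpoint patterns appearing in conditions (5) and (6). The remaining cases follow the standard interval-module extension calculus, analogous to the finite and cyclic settings treated in \cite{bongartz1982covering} and in Lemma 3.1 of \cite{buan2004tilting}.
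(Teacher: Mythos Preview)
The paper does not actually prove this lemma: it is stated with the remark ``Similarly to Lemma 3.1 in \cite{buan2004tilting}'' and no further argument. Your proposal supplies exactly the kind of direct pairwise $\mathrm{Ext}^1$ analysis that underlies that reference, adapted to interval modules over $[0,1]$, so in spirit you are doing what the paper defers to the citation. The reduction to ordered pairs via biadditivity, the pointwise dimension constraint forcing $\mathbf{E}$ to be either the split sum or $\mathbf{T}_{\text{union}}\oplus\mathbf{T}_{\text{intersection}}$, and the identification of conditions (1)--(6) as the complement of the ``gluable'' configurations are all correct and standard.

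One small imprecision worth tightening: when you describe the non-split case as ``one starts strictly inside the other and extends strictly past its end,'' this phrasing literally covers only the genuine overlap (staircase) configuration and not the abutting case where the intervals meet at a single point with compatible endpoint types (e.g.\ $|x_i,y_i]$ and $(y_i,y_j|$, whose union is an interval while the intersection is empty). You clearly have this case in mind later when you discuss (5) and (6), but in the formal write-up you should state the non-split criterion as ``the union $|x_i,y_i|\cup|x_j,y_j|$ is again an interval and neither interval is contained in the other,'' which uniformly captures both the overlapping and the correctly-abutting situations. With that adjustment the case analysis goes through cleanly.
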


If two intervals $|a,b|$ and $|a',b'|$ satisfy one of the conditions in Lemma \ref{lemma2-2}, we say that they are compatible.

\subsection{Representations of type $\alpha$}
Let $\alpha= (a_0,a_1,\ldots,a_n)$ with $a_0=0$, $a_n=1$, and $a_i < a_{i+1}$ for any $0\leq i<n$.

For any $\mathbf{M}=\bigoplus\limits_ {i \in I}\mathbf{T}_{|x_i,y_i|}$ of the category $\text{Rep}'_{k}(\mathbf{Q})$, let $\hat{I}=\{|x_i,y_i||i\in I\}$.
For any $c \in (a_s, a_{s+1})$, denote
$$ \phi _{1}^{r}(c,\mathbf{M})= \{ d | a_{s+1} \leq d \leq 1, [c, d] \in \hat{I}\},$$$$\phi _{2}^{r}(c,\mathbf{M})= \{d |a_{s+1} \leq d \leq 1, [c, d) \in \hat{I}\},$$
$$ \phi _{3}^{r}(c,\mathbf{M})= \{d |a_{s+1} \leq d \leq 1 , (c, d] \in \hat{I}\},$$$$\phi _{4}^{r}(c,\mathbf{M})= \{d |a_{s+1} \leq d \leq 1, (c, d) \in \hat{I}\},$$
$$ \phi _{1}^{l}(c,\mathbf{M})= \{ d |0 \leq d \leq a_s, [d, c] \in \hat{I}\},$$$$\phi _{2}^{l}(c,\mathbf{M})= \{d |0 \leq d \leq a_s, (d, c] \in \hat{I}\},$$ 
$$ \phi _{3}^{l}(c,\mathbf{M})= \{d |0 \leq d \leq a_s,  [d, c)\in \hat{I}\},$$$$\phi _{4}^{l}(c,\mathbf{M})= \{d |0 \leq d \leq a_s, (d, c) \in \hat{I}\}.$$

The representation $\mathbf{M}$ is called of type $\alpha$, if it satisfies the following conditions:
\begin{enumerate} 
  \item for any $c\in [0,1]-\{a_0,\;  a_1 ,... ,\;  a_n\}$, we have 
  $$ \phi_{1}^{r}(c,\mathbf{M})  =\phi_{3}^{r}(c,\mathbf{M})\subset\{a_0,\;  a_1 ,... ,\;  a_n\} ,$$ $$ \; \phi_{2}^{r}(c,\mathbf{M})  =\phi_{4}^{r}(c,\mathbf{M}) \subset\{a_0,\;  a_1 ,... ,\;  a_n\},$$
  $$ \phi_{1}^{l}(c,\mathbf{M})  =\phi_{3}^{l}(c,\mathbf{M}) \subset\{a_0,\;  a_1 ,... ,\;  a_n\},\;$$ $$\phi_{2}^{l}(c,\mathbf{M}) =\phi_{4}^{l}(c,\mathbf{M})\subset\{a_0,\;  a_1 ,... ,\;  a_n\},$$
  $$|\phi_{1}^{r}(c,\mathbf{M})|  + |\phi_{2}^{r}(c,\mathbf{M})|  + |\phi_{1}^{l}(c,\mathbf{M})| + |\phi_{2}^{l}(c,\mathbf{M})| =1;$$ 
  \item for any $s=0,1,\ldots,n-1$ and $c, f \in (a_{s}, a_{s+1})$, we have
  $$\phi_{1}^{r}(c,\mathbf{M}) =\phi_{1}^{r}(f,\mathbf{M}), \;\phi_{2}^{r}(c,\mathbf{M}) =\phi_{2}^{r}(f,\mathbf{M}), $$$$\phi_{3}^{r}(c,\mathbf{M})=\phi_{3}^{r}(f,\mathbf{M}),\; \phi_{4}^{r}(c,\mathbf{M})=\phi_{4}^{r}(f,\mathbf{M}),$$ 
  $$\phi_{1}^{l}(c,\mathbf{M})=\phi_{1}^{l}(f,\mathbf{M}),\; \phi_{2}^{l} (c,\mathbf{M})=\phi_{2}^{l}(f,\mathbf{M}), $$$$ \phi_{3}^{l} (c,\mathbf{M})=\phi_{3}^{l}(f,\mathbf{M}), \;\phi_{4}^{l} (c,\mathbf{M})=\phi_{4}^{l}(f,\mathbf{M}).$$
   \end{enumerate}

\begin{remark}
By the definition, a representation is of type $\alpha$ implies that it is constant in intervals $(a_i,a_{i+1})$ for all $0\leq i<n$. Hence, we can build a connection between representations of type $\alpha$ and representations of finite quivers.
Note that this setting is similar to that in Section "Other Dynkin types" of Appendix B in \cite{Sala_Schiffmann2019}.
\end{remark}

\begin{example}\label{example_1}
Let $\alpha=(0,1)$.
Consider the following representations
  $$\mathbf{M}_1= \mathbf{T}_{[0,1]} \oplus \mathbf{T}_{(0,1]} \oplus \mathbf{T}_{[1,1]} \oplus\bigoplus_{x\in(0,1)} (\mathbf{T}_{[x,1]} \oplus \mathbf{T}_{(x,1]}),$$
    $$\mathbf{M}_2= \mathbf{T}_{[0,1]} \oplus \mathbf{T}_{(0,1]} \oplus \mathbf{T}_{[1,1]} \oplus \bigoplus_{x\in(0,1)} (\mathbf{T}_{(0,x]} \oplus \mathbf{T}_{(0,x)}),$$
     $$\mathbf{M}_3= \mathbf{T}_{[0,1]} \oplus \mathbf{T}_{(0,1]} \oplus \mathbf{T}_{(0,1)} \oplus \bigoplus_{x\in(0,1)} (\mathbf{T}_{[x,1)} \oplus \mathbf{T}_{(x,1)}),$$
       $$\mathbf{M}_4= \mathbf{T}_{[0,1]} \oplus \mathbf{T}_{(0,1]} \oplus \mathbf{T}_{(0,1)} \oplus \bigoplus_{x\in(0,1)} (\mathbf{T}_{(0,x)} \oplus \mathbf{T}_{(0,x]}),$$
         $$\mathbf{M}_5= \mathbf{T}_{[0,1]} \oplus \mathbf{T}_{[0,1)} \oplus \mathbf{T}_{(0,1)} \oplus \bigoplus_{x\in(0,1)} (\mathbf{T}_{[x,1)} \oplus \mathbf{T}_{(x,1)}),$$
           $$\mathbf{M}_6= \mathbf{T}_{[0,1]} \oplus \mathbf{T}_{[0,1)} \oplus \mathbf{T}_{(0,1)} \oplus \bigoplus_{x\in(0,1)} (\mathbf{T}_{(0,x]} \oplus \mathbf{T}_{(0,x)}),$$
             $$\mathbf{M}_7= \mathbf{T}_{[0,0]} \oplus \mathbf{T}_{[1,1]} \oplus \mathbf{T}_{[0,1]} \oplus\bigoplus_{x\in(0,1)} ( \mathbf{T}_{[x,1]} \oplus \mathbf{T}_{(x,1]}),$$  
               $$\mathbf{M}_8= \mathbf{T}_{[0,0]} \oplus \mathbf{T}_{[1,1]} \oplus \mathbf{T}_{[0,1]}  \oplus\bigoplus_{x\in(0,1)} ( \mathbf{T}_{[0,x]} \oplus \mathbf{T}_{[0,x)}),$$
                 $$\mathbf{M}_9= \mathbf{T}_{[0,0]} \oplus \mathbf{T}_{[0,1)} \oplus \mathbf{T}_{[0,1]} \oplus\bigoplus_{x\in(0,1)} ( \mathbf{T}_{[x,1)} \oplus \mathbf{T}_{(x,1)}),$$
                   $$\mathbf{M}_{10}=\mathbf{T}_{[0,0]} \oplus \mathbf{T}_{[0,1)} \oplus \mathbf{T}_{[0,1]}  \oplus\bigoplus_{x\in(0,1)} ( \mathbf{T}_{[0,x]} \oplus \mathbf{T}_{[0,x)}).$$
                   
The representations $\mathbf{M}_i$  are of type $\alpha$ for $i=1,2,\ldots,10$.
And the set $$\{[\mathbf{M}_i]|i=1,2,\ldots,10\}$$ gives all the isomorphism classes of maximal rigid representations of type $\alpha$.

\end{example}

The main result of this paper is the following theorem on the number of the isomorphism classes of maximal rigid representations of a given type.

\begin{theorem}\label{main-result}
Let $\mathbf{Q}$ be the continuous quiver of type $A$ and $\alpha= (a_0,a_1,\ldots,a_n)$ with $a_0=0$, $a_n=1$, and $a_i < a_{i+1}$ for any $0\leq i<n$.
Let $$\mathbf{A}=\{[\mathbf{M}] | \mathbf{M} \textrm{ is a maximal rigid representation of quiver } Q \textrm{  of type } \alpha\}.$$
Then$$|\mathbf{A}|= \frac{2^{n-1}}{n+1} \binom{4n+2}{2n+1}.$$
\end{theorem}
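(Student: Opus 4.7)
The plan is to establish a bijection between the set $\mathbf{A}$ of isomorphism classes of type-$\alpha$ maximal rigid representations and pairs
\[
(T,\varepsilon)\in\bigl\{\text{basic tilting modules of a linearly oriented }A_{2n+1}\bigr\}\times\{L,R\}^{n},
\]
and then invoke the Bongartz--Gabriel tilting count of \cite{bongartz1982covering}. The argument proceeds in three steps.

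\emph{Step 1 (structure imposed by type $\alpha$).} Conditions (1) and (2) in the definition of type $\alpha$, together with the four equalities $\phi^{r}_{1}=\phi^{r}_{3}$, $\phi^{r}_{2}=\phi^{r}_{4}$, $\phi^{l}_{1}=\phi^{l}_{3}$, $\phi^{l}_{2}=\phi^{l}_{4}$, force every $\mathbf{M}\in\mathbf{A}$ to split canonically as $\mathbf{M}=\mathbf{M}^{\mathrm{disc}}\oplus\mathbf{M}^{\mathrm{cont}}$. Here $\mathbf{M}^{\mathrm{disc}}$ is the sum of indecomposables $\mathbf{T}_{|a_i,a_j|}$ whose two endpoints both lie in $\{a_0,\ldots,a_n\}$, and $\mathbf{M}^{\mathrm{cont}}$ is determined by an $n$-tuple of attachment data $(d_s,a_{j_s},\varepsilon_s)_{s=0}^{n-1}$: a direction $d_s\in\{L,R\}$, an attachment break-point $a_{j_s}$ on that side, and a bracket $\varepsilon_s\in\{\text{closed},\text{open}\}$ at $a_{j_s}$. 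The paired $\phi$-equalities encode the fact that for each $c\in(a_s,a_{s+1})$ both bracket variants at $c$ appear in $\mathbf{M}^{\mathrm{cont}}$ together as a pair of summands.

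\emph{Step 2 (bijection with tilting data).} The indecomposables of $\mathbf{Q}$ with both endpoints in $\{a_0,\ldots,a_n\}$ number $(n+1)+4\binom{n+1}{2}=(n+1)(2n+1)$, which coincides with the number of indecomposables of a linearly oriented $A_{2n+1}$ quiver $\widetilde Q$ on vertices $v_0<v_1<\cdots<v_{2n}$ under the convention $v_{2i}\leftrightarrow a_i$ and $v_{2i+1}\leftrightarrow(a_i,a_{i+1})$. I will define an explicit dictionary $\mathbf{T}_{|a_i,a_j|}\leftrightarrow[v_p,v_q]$ that enlarges the $\widetilde Q$-interval by one vertex on each side precisely when the corresponding bracket of $|a_i,a_j|$ is open, and then verify, using Lemma \ref{lemma2-2}, that the six compatibility conditions translate exactly into the standard Ext-compatibility of $\widetilde Q$-intervals. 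Under this dictionary, a candidate discrete part $\mathbf{M}^{\mathrm{disc}}$ extends to a maximal rigid $\mathbf{M}$ of type $\alpha$ if and only if it corresponds to a basic tilting module $T$ of $\widetilde Q$; the bracket $\varepsilon_s$ and attachment vertex $a_{j_s}$ of the continuous part are then forced by $T$, while the direction $d_s\in\{L,R\}$ remains a free binary choice for each of the $n$ open intervals.

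\emph{Step 3 (counting and obstacle).} By the Bongartz--Gabriel formula (\cite{bongartz1982covering}, recalled in Section 3), the number of basic tilting modules of a linearly oriented $A_{2n+1}$ quiver equals the Catalan number $C_{2n+1}=\frac{1}{2n+2}\binom{4n+2}{2n+1}$. Multiplying by the $2^n$ directional choices yields
\[
|\mathbf{A}|=2^n\cdot\frac{1}{2n+2}\binom{4n+2}{2n+1}=\frac{2^{n-1}}{n+1}\binom{4n+2}{2n+1}.
\]
The main obstacle will be Step 2, specifically making the bracket-to-vertex dictionary precise and then verifying, in both directions, that rigidity plus maximality on the continuous side match the tilting condition on $\widetilde Q$, that each attachment bracket $\varepsilon_s$ is indeed forced by $T$, and that the $2^n$ directional choices are genuinely independent rather than mutually constrained. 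A sanity check at $n=1$ gives $C_3\cdot 2=10$, matching the ten representations listed in Example \ref{example_1}.
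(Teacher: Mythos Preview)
Your proposal is correct and follows essentially the same route as the paper: send the discrete part of a type-$\alpha$ maximal rigid representation to a maximal rigid (equivalently basic tilting) representation of a linear $A_{2n+1}$ quiver, show this yields a $2^{n}$-to-$1$ surjection onto the tilting set with the fiber coming from the free left/right directional choice in each of the $n$ open subintervals $(a_s,a_{s+1})$, and then apply the Catalan count. One small correction to your sketch of the dictionary: an open bracket should \emph{shrink} the target interval by one vertex (e.g.\ $(a_i,a_j]\mapsto[v_{2i+1},v_{2j}]$, $[a_i,a_j)\mapsto[v_{2i},v_{2j-1}]$), not enlarge it---under your stated convention $(a_0,\,\cdot\,]$ would land off the left end of the quiver.
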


The proof of this theorem will be given in Section \ref{section-proof}.

In Example \ref{example_1}, the number of the isomorphism classes of maximal rigid representations is equal to $10$, which coincides with Theorem \ref{main-result}.

\section{Tilting representations for quivers of type A}

In this section, we shall recall some basic notations and results on tilting representations (see \cite{buan2004tilting,assem2006elements}).

Let $Q=(Q_0,Q_1,s,t)$ be the following quiver of type $A$, 
$$\bullet   \; \longrightarrow   \; \bullet   \; \longrightarrow   \;  \cdots   \; \longrightarrow   \; \bullet   \; \longrightarrow    \;\bullet   \;\quad $$
where $Q_0$ is the set of vertices, $Q_1$ is the set of arrows, and $s,t:Q_1\rightarrow Q_0$ are two maps such that $s(\rho)$ is the source of $\rho$ and $t(\rho)$ is the target of $\rho$.

Let $k$ be a fixed field. A representation of ${Q}$ over $k$ is given by ${M}=({M}(a), {M}(\rho))$, where ${M}(a)$ is a $k$-vector space for any $a \in Q_0$, and ${M}(\rho):{M}(s(\rho)) \rightarrow {M}(t(\rho))$ is a $k$-linear map for any $\rho\in Q_1$.  Let ${M}=({M}(a), {M}(\rho))$ and ${N}=({N}(a), {N}(\rho))$ be representations of $Q$. A family of $k$-linear maps $f=(f_a)_{a\in Q_0}$ is called a morphism from ${M}$ to ${N}$ if $f_{t(\rho)}{M}(\rho)={N}(\rho)f_{s(\rho)}$ for any $\rho\in Q_1$. Denote by $\text{Rep}_{k}({Q})$ the category of representations of ${Q}$.

A representation $M$ of $Q$ over $k$ is called basic if the direct summands of $M$ are not isomorphic to each other, and is called rigid if $\text{Ext}^{1}(M, M)=0$.  A representation $M$ is called maximal rigid, if it is basic and rigid, and $M \oplus N$ is rigid implies that $N$ $\in\text{add}M$ for any representation $N$ of $Q$.

A rigid representation $M$ is called tilting if the projective dimension of $M$ is equal to $1$, and there exists a short exact sequence $0 \to P \to M_i \to M_j \to 0$ with $M_i$, $M_j$ in add$M$ for any indecomposable projective representation $P$ of $Q$.

For any $a,b  \in Q_0$, denote $a\leq b$ if there is a path from $a$ to $b$ and $a<b$ if  $a\leq b$ but $a\neq b$. 
  For any $a\leq b  \in Q_0$, denote by $T_{[a, b]}$ the following representation of $Q$, where
 $$T_{[a, b]}(x) = \left\{ 
    \begin{aligned}
    &k, & & x \in Q_0 \textrm{ such that } a\leq x\leq b\cr 
    &0, & & otherwise,
    \end{aligned}
\right.$$
and
$${T}_{[a, b]}(\rho) = \left\{ 
    \begin{aligned}
    &1_k, & & a\leq s(\rho)< t(\rho)\leq b \cr 
    &0, & & otherwise.
    \end{aligned}
\right.$$


  
  

\begin{proposition} [\cite{bongartz1982covering,buan2004tilting} ]\label{prop_1}
Let $A=\{[M]|M\textrm{ is a basic and tilting representation of } Q\}.$
Then $$|A|= \frac{1}{n+1} \binom{2n}{n}.$$
\end{proposition}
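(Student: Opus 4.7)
The plan is to reduce the count to the classical enumeration of triangulations of a convex polygon. Let $n = |Q_0|$. The indecomposable representations of $Q$ are the interval modules $T_{[i,j]}$ with $1 \le i \le j \le n$. Using the projective resolution
\begin{equation*}
0 \to P_{b+1} \to P_{a} \to T_{[a,b]} \to 0,
\end{equation*}
where $P_i = T_{[i,n]}$ (with the convention $P_{n+1} := 0$), together with the identification $\text{Hom}(P_i, N) = N(i)$, a direct calculation yields that $\text{Ext}^1(T_{[a,b]}, T_{[c,d]}) \ne 0$ precisely when $a < c \le b+1$ and $b < d$. Hence two interval modules are \emph{compatible} (Ext vanishes in both directions) iff the intervals are either nested or strictly separated (meaning $c \ge b+2$ or $a \ge d+2$).

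Next, I would set up a bijection with triangulations of a convex $(n+2)$-gon. Label its vertices $v_0, v_1, \ldots, v_{n+1}$ cyclically and assign to each triangle with vertex set $\{v_{i_1}, v_{i_2}, v_{i_3}\}$, $i_1 < i_2 < i_3$, the indecomposable $T_{[i_1+1,\, i_3-1]}$. A triangulation has exactly $n$ triangles, and any two of them yield compatible intervals: two triangles sharing an edge produce nested intervals, while two triangles separated by some diagonal produce strictly separated intervals. Conversely, an induction on $n$ (slicing off the triangle incident to the fixed side $v_0 v_{n+1}$) shows that every basic rigid module with $n$ non-isomorphic indecomposable summands arises in this way. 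Because $Q$ is hereditary the projective-dimension condition is automatic, and the short exact sequence $0 \to P \to M_i \to M_j \to 0$ required for each indecomposable projective $P = T_{[k,n]}$ can be read off from the triangle in the triangulation whose base is the chord $v_{k-1} v_{n+1}$ (the third vertex determines a non-trivial decomposition of $P$ through summands of $M$). Thus basic tilting modules correspond bijectively with triangulations of the $(n+2)$-gon.

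Finally, the number of triangulations of a convex $(n+2)$-gon is the Catalan number $C_n = \frac{1}{n+1}\binom{2n}{n}$, which follows from the Catalan recurrence $C_n = \sum_{k=0}^{n-1} C_k C_{n-1-k}$ obtained by summing over the apex of the triangle incident to a fixed side. The main obstacle in the proof is the bijection of the second paragraph, specifically verifying that algebraic compatibility of interval modules matches exactly the geometric non-crossing condition on chords, and that tilting (i.e., maximality plus the short-exact-sequence axiom) corresponds to combinatorial maximality of a set of non-crossing diagonals; once this alignment is pinned down, the Catalan count immediately yields the claimed formula.
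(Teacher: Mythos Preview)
The paper does not supply its own proof of this proposition; it is quoted as a known result from \cite{bongartz1982covering,buan2004tilting}. Your argument---identifying the Ext-vanishing condition on interval modules with non-crossing of chords in an $(n+2)$-gon and then invoking the Catalan count of triangulations---is precisely the classical argument behind those references, and your Ext computation and the resulting compatibility criterion (nested or separated by at least $2$) are correct.

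Two small points deserve tightening. First, your dichotomy ``two triangles sharing an edge produce nested intervals, while two triangles separated by some diagonal produce strictly separated intervals'' omits the case of triangles sharing only a vertex; the conclusion is nonetheless correct, since the span edges $v_{i_1}v_{i_3}$ and $v_{j_1}v_{j_3}$ are non-crossing chords, so the spans are nested or overlap in at most one point, whence the shifted intervals $[i_1+1,i_3-1]$ and $[j_1+1,j_3-1]$ are nested or separated by at least $2$. Second, your recipe for the short exact sequence for $P_k = T_{[k,n]}$ via ``the triangle whose base is the chord $v_{k-1}v_{n+1}$'' does not work as stated, because that chord need not appear in the given triangulation (e.g.\ for $n=3$ with diagonals $v_1v_3,\,v_1v_4$, the chord $v_2v_4$ is absent). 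The simplest repair is to bypass this axiom: for a hereditary algebra, any basic rigid module with $|Q_0|$ non-isomorphic indecomposable summands is automatically tilting (Bongartz completion plus the fact that partial tilting modules have at most $|Q_0|$ summands), so once your bijection with triangulations is in place the Catalan count finishes the proof.
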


The following lemma is well-known and we give a proof for convenience.
\begin{lemma}
Let $M$ be a representation of $Q$ over $k$. 
The representation $M$ is basic and tilting, if and only if $M$ is maximal rigid.
\end{lemma}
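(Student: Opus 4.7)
The plan is to prove each direction using two standard facts about the hereditary algebra $kQ$: it has global dimension at most $1$, and every basic rigid $kQ$-module admits a Bongartz completion to a basic tilting module. Both are stated in \cite{assem2006elements}, which the statement already cites.

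First I would handle the direction ($\Rightarrow$). Assume $M$ is basic and tilting. Then $M$ is rigid by definition of tilting, and it is basic by hypothesis, so only the maximality has to be checked. Suppose $M \oplus N$ is rigid for some representation $N$. Replacing $N$ by the sum of one copy of each indecomposable summand not already appearing in $M$, we may assume $M \oplus N$ is basic. Because $kQ$ is hereditary, any basic rigid module has at most $|Q_0|=n$ pairwise non-isomorphic indecomposable summands, and a basic tilting module attains this bound. Since $M$ is already a basic tilting module, it has exactly $n$ indecomposable summands, so $N$ can contribute no new summand and therefore $N\in\mathrm{add}\,M$. This gives maximal rigidity.

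Next I would do the direction ($\Leftarrow$). Assume $M$ is maximal rigid. The projective dimension condition is free because $kQ$ has global dimension $\le 1$, so $\mathrm{pd}\,M\le 1$. Apply Bongartz completion to the basic rigid module $M$: there exists a module $X$ such that $M\oplus X$ is a basic tilting representation of $Q$, and in particular $M\oplus X$ is rigid. By maximality of $M$, every indecomposable summand of $X$ lies in $\mathrm{add}\,M$, and since $M\oplus X$ is basic this forces $X\in\mathrm{add}\,M$ with no new summand, i.e.\ $M$ itself is already tilting. Since $M$ was assumed basic, we conclude $M$ is basic and tilting.

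The only slightly delicate point is verifying the short exact sequence clause in the definition of tilting used in this paper, which is automatic once Bongartz completion is invoked (the Bongartz complement is designed precisely to supply these sequences for every indecomposable projective). So the main obstacle is simply citing/recalling Bongartz completion correctly in the hereditary setting; once that is in hand, both implications reduce to a counting argument on the number of non-isomorphic indecomposable summands. No additional calculation specific to the quiver $Q$ of type $A$ is required, since the argument works for any finite acyclic quiver.
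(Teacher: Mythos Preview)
Your proof is correct, but your argument for the forward direction ($\Rightarrow$) is genuinely different from the paper's. You use the standard counting fact that over a hereditary algebra a basic rigid module has at most $|Q_0|$ indecomposable summands, with equality precisely for tilting modules, and conclude immediately that no new summand $N$ can be added. The paper instead gives a self-contained diagram chase: starting from a projective resolution $0\to P_0\to P_1\to N\to 0$ and the tilting sequences $0\to P_i\to M_{2i-1}\to M_{2i}\to 0$ with $M_j\in\mathrm{add}\,M$, two pushouts together with the rigidity of $M\oplus N$ force an isomorphism $M_1\oplus M_4\cong N\oplus M_2\oplus M_3$, whence $N\in\mathrm{add}\,M$. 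Your route is shorter but relies on quoting the summand bound; the paper's route is more elementary in that it uses only the definitions of rigid and tilting, at the cost of the diagram manipulation. For the converse ($\Leftarrow$), the paper simply declares it ``direct''; your explicit use of Bongartz completion is exactly the argument one would expect behind that word, so here the two agree.
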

\begin{proof}
Assume that $M$ is basic and tilting, we shall prove that $M$ is maximal rigid. Obviously, $M$ is basic and rigid. Hence, we only need to prove that $M\oplus N$ is rigid implies  $N\in$add$M$.

Consider a projective resolution of $N$ $$0 \to P_0 \to P_1 \to N \to 0,$$ where $P_0$, $P_1$ are projective representations. Since $M$ is tilting,
there exists an exact sequence $$0 \to P_1 \to M_1 \to M_2 \to 0,$$ where  $M_1$, $M_2 \in$ add$M$. Since $M\oplus N$ is rigid, we get the following commutative diagram
$$\xymatrix{
                  &0 \ar[d]&0 \ar[d]\\
                  &P_0 \ar@{=}[r] \ar[d] &P_0 \ar[d]\\
                  0\ar[r]&P_1\ar[d] \ar[r] & M_1\ar[d] \ar[r]&M_2 \ar@{=}[d] \ar[r] &0  \\
                  0\ar[r]&N\ar[d] \ar[r] & N\oplus M_2\ar[d] \ar[r]&M_2 \ar[r] &0.  \\
                  &0 &0}$$

Similarly, since $M$ is tilting,
there exists an exact sequence $$0 \to P_0 \to M_3 \to M_4 \to 0,$$ where  $M_3$, $M_4 \in$ add$M$.
Since $M\oplus N$ is rigid, we have the following commutative diagram 
    $$\xymatrix{
                  &0 \ar[d]&0 \ar[d]\\
                  0\ar[r]&P_0\ar[d] \ar[r] & M_1\ar[d] \ar[r]&N\oplus M_2 \ar@{=}[d] \ar[r] &0  \\
                  0\ar[r]&M_3 \ar[d] \ar[r] & N \oplus M_2 \oplus M_3\ar[d] \ar[r]&N\oplus M_2 \ar[r] &0.  \\
                  &M_4 \ar@{=}[r] \ar[d] &M_4 \ar[d]\\
                  &0 &0
                  }$$
                  
Hence, we have $$M_1 \oplus M_4 \cong N \oplus M_2 \oplus M_3.$$ 
Since $M_1,M_2,M_3,M_4\in$ add$M$, we have $N\in$ add$M$. 

The converse is direct and we get the desired result.
\end{proof}

\begin{corollary}\label{cor_3.3}
Let $A=\{[M]|M\textrm{ is a maximal rigid representation of } Q\}$.
Then $$|A|= \frac{1}{n+1} \binom{2n}{n}.$$
\end{corollary}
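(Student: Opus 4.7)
The plan is immediate: this corollary is a direct consequence of the two results that immediately precede it. Proposition \ref{prop_1} gives the cardinality of the set of isomorphism classes of basic tilting representations of $Q$, namely $\frac{1}{n+1}\binom{2n}{n}$, which is the $n$-th Catalan number. The lemma just proved establishes that for a representation $M$ of the linearly oriented quiver $Q$ of type $A$, the properties ``basic and tilting'' and ``maximal rigid'' are equivalent.

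First, I would observe that the two sets
\[
A_{\text{tilt}} = \{[M] \mid M \text{ is a basic and tilting representation of } Q\}
\]
and
\[
A_{\text{mr}} = \{[M] \mid M \text{ is a maximal rigid representation of } Q\}
\]
coincide as sets of isomorphism classes. Indeed, the equivalence of the defining properties at the level of representations, supplied by the preceding lemma, passes to isomorphism classes since both ``basic'', ``tilting'', and ``maximal rigid'' are invariants of the isomorphism class of $M$.

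Once this identification is in place, I would simply invoke Proposition \ref{prop_1} to conclude
\[
|A_{\text{mr}}| = |A_{\text{tilt}}| = \frac{1}{n+1}\binom{2n}{n},
\]
which is the desired formula. There is no genuine obstacle here: the corollary is a formal consequence of the two preceding results and its proof is essentially a one-line citation. The only thing to be slightly careful about is matching the indexing conventions (number of vertices versus the integer $n$ appearing in the binomial coefficient) between Proposition \ref{prop_1} and Corollary \ref{cor_3.3}, but since both statements use the same $n$ this is automatic.
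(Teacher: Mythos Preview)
Your proposal is correct and matches the paper's approach exactly: the corollary is stated without proof in the paper precisely because it follows immediately from Proposition~\ref{prop_1} together with the preceding lemma equating ``basic and tilting'' with ``maximal rigid''. Your one-line argument is all that is needed.
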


\section{Proof of Theorem \ref{main-result}}\label{section-proof}
Let $\mathbf{Q}$ be the continuous quiver of type $A$, and $\alpha= (a_0,a_1,\ldots,a_n)$ with $a_0=0$, $a_n=1$, and $a_i < a_{i+1}$ for any $0\leq i<n$.
Denote $$\mathbf{A}=\{[\mathbf{M}]|\mathbf{M} \textrm{ is a maximal rigid representation of 
 } \mathbf{Q} \textrm{  of type } \alpha\}$$
and
$$\mathbf{B}=\{[\mathbf{M}]|\mathbf{M} \textrm{ is a rigid representation of } \mathbf{Q}\}.$$

Let $\widetilde{Q}$ be the following quiver.
$$\qquad  \bullet \;  \longrightarrow \; \bullet  \; \longrightarrow \;  \bullet  \; \longrightarrow   \; \bullet   \; \longrightarrow   \; \bullet  \; \longrightarrow  \; \cdots   \; \longrightarrow   \;  \bullet   \; \longrightarrow   \; \bullet   \; \longrightarrow    \;\bullet   \;\quad$$
$$  \qquad \qquad a_0 \; \qquad a_{0}^{+} \; \; \;  \; \quad a_{1}^{-} \;  \qquad  a_1 \qquad \; \; a_{1}^{+}  \quad \quad  \cdots  \;  \quad a_{n-1}^{+} \; \qquad a_{n}^{-}  \;\qquad a_{n}  \;\qquad \quad $$
Denote
$$\begin{aligned}
   \widetilde{B}=&\{[M]|M=\bigoplus_ {j\in I}{T}_{[x_j,y_j]} \textrm{ is a rigid representation of } \widetilde{Q}, \\
  & \textrm{and $x_j\neq a^-_i, y_j\neq a^+_i$ for any $j\in I$  and $i=0,1,\ldots,n$}\}.
\end{aligned}$$

Let $\widehat{Q}$ be the following quiver.
$$\qquad  \bullet \;  \longrightarrow \; \bullet  \; \longrightarrow \;  \bullet  \; \longrightarrow   \; \bullet   \; \longrightarrow\;\cdots \;\longrightarrow \; \bullet   \; \longrightarrow   \;  \bullet   \; \longrightarrow   \; \bullet   \; \longrightarrow    \;\bullet   \;\quad
$$
  $$  \qquad \qquad a_0 \; \qquad a_{01} \; \; \;  \; \quad a_1 \;  \qquad  a_{12} \quad \; \; ...  \quad \; a_{n-2,n-1}  \quad a_{n-1} \; \quad a_{n-1,n}  \;\quad a_{n}  \;\qquad \quad $$
Denote
$$\widehat{A} =\{[M] |  M\textrm{ is  a maximal rigid representation of quiver } \widehat{Q}\}$$
and
$$\widehat{B}=\{[M]|  M \textrm{ is a rigid representation of quiver } \widehat{Q}\}.$$

Consider a map $\tau_1:\mathbf{B}\to \widetilde{B}$ defined by
\begin{enumerate} 
  \item for any $x\in[0,1]-\{a_0,a_1,\ldots,a_n\}$, $$\tau_1([\mathbf{T}_{|a_i,x|}])=0 \textrm{ and } \tau_1([\mathbf{T}_{|x, a_i|}])=0;$$
  \item for any $0\leq i<j\leq n$,
  $$\tau_1([\mathbf{T}_{[a_i,a_j]}])=[{T}_{[a_i,a_j]}], $$$$\tau_1([\mathbf{T}_{(a_i,a_j]}])=[{T}_{[a_i^+,a_j]}],$$$$\tau_1([\mathbf{T}_{[a_i,a_j)}])=[{T}_{[a_i,a_j^-]}], 
  $$$$\tau_1([\mathbf{T}_{(a_i,a_j)}])=[{T}_{[a_i^+,a_j^-]}];$$
  \item for any $0\leq i\leq n$,
  $$\tau_1([\mathbf{T}_{[a_i,a_i]}])=[{T}_{[a_i,a_i]}];$$
  \item for any $[\mathbf{M}], [\mathbf{N}] \in\mathbf{B}$,
  $$\tau_1([\mathbf{M} \oplus \mathbf{N}])=[{M} \oplus {N}],$$
  where $[M]=\tau_1([\mathbf{M}])$ and $[N]=\tau_1([\mathbf{N}]).$
   \end{enumerate}

Consider a map $\tau_2:\widetilde{B} \to \widehat{B}$ defined by
\begin{enumerate} 
  \item for any $0\leq i<j\leq n$,
  $$\tau_2([{T}_{[a_i,a_j]}])=[{T}_{[a_i,a_j]}], $$$$\tau_2([{T}_{[a_i^+,a_j]}])=[{T}_{[a_{i,i+1},a_j]}],$$$$\tau_2([{T}_{[a_i,a_j^-]}])=[{T}_{[a_i,a_{j-1,j}]}], 
  $$$$\tau_2([{T}_{[a_i^+,a_j^-]}])=[{T}_{[a_{i,i+1},a_{j-1,j}]}];$$
  \item for any $0\leq i\leq n$,
  $$\tau_2([{T}_{[a_i,a_i]}])=[{T}_{[a_i,a_i]}];$$
  \item for any $[{M}], [{N}] \in\widetilde{B}$,
  $$\tau_2([{M}\oplus{N}])=[{M'} \oplus {N'}],$$
  where $[M']=\tau_2([{M}])$ and $[N']=\tau_2([{N}]).$
   \end{enumerate}

\begin{lemma}\label{lemma_4.1}
With above notations, we have
\begin{enumerate} 
  \item the map $\varphi=\tau_2\tau_1:\mathbf{B}\to\widehat{B}$ is surjective;
  \item $\varphi(\mathbf{A})=\widehat{A}$;
  \item for any $M\in\widehat{A}$, we have $|{\varphi}_{\mathbf{A}}^{-1}([M])|= 2^{n},$ where ${\varphi}_{\mathbf{A}}$ is the restriction of $\varphi$ on $\mathbf{A}$.
   \end{enumerate}
\end{lemma}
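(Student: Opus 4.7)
\medskip
\noindent\textbf{Proof proposal for Lemma \ref{lemma_4.1}.}
The plan is to reduce each of the three parts to a direct comparison of how $\tau_1$ and $\tau_2$ act on indecomposable summands and on the compatibility relation of Lemma \ref{lemma2-2}, together with the analogous (purely discrete) compatibility for intervals in $\widetilde{Q}$ and $\widehat{Q}$. A case-by-case inspection of the defining formulas shows that $\tau_2$ is a bijection between the indecomposables appearing in $\widetilde{B}$ and those appearing in $\widehat{B}$: the exclusion in the definition of $\widetilde{B}$ of intervals that start with $a_i^-$ or end with $a_i^+$ is exactly what makes the identification $a_i^+\leftrightarrow a_{i,i+1}\leftrightarrow a_{i+1}^-$ well-defined, and the same inspection verifies that the compatibility relation is preserved in both directions. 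Likewise, $\tau_1$ restricts to a compatibility-preserving bijection between the indecomposables of $\mathbf{Q}$ whose endpoints both lie in $\{a_0,\ldots,a_n\}$ and the indecomposables appearing in $\widetilde{B}$, and sends every other indecomposable of $\mathbf{Q}$ to zero.

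Part (1) is then immediate: given $[M]\in\widehat{B}$, set $[M']=\tau_2^{-1}([M])$ and let $[\mathbf{M}']\in\mathbf{B}$ be the representation whose indecomposable summands are the canonical ``big'' intervals in $\mathbf{Q}$ (both endpoints in $\{a_0,\ldots,a_n\}$) corresponding to the summands of $M'$. Compatibility preservation guarantees $[\mathbf{M}']\in\mathbf{B}$, and $\varphi([\mathbf{M}'])=[M]$ by construction.

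For part (2), let $[\mathbf{M}]\in\mathbf{A}$. The image $\varphi([\mathbf{M}])$ is clearly rigid and basic. To check maximality, suppose $\varphi(\mathbf{M})\oplus X$ is rigid in $\widehat{Q}$ for some indecomposable $X\notin\mathrm{add}\,\varphi(\mathbf{M})$, and lift $X$ to its canonical big preimage $\mathbf{X}$ in $\mathbf{Q}$. Compatibility of $\mathbf{X}$ with the big summands of $\mathbf{M}$ comes from the first paragraph; compatibility with the small summands is a short local check at each $(a_s,a_{s+1})$ using the explicit form of a type $\alpha$ representation and Lemma \ref{lemma2-2}. Hence $\mathbf{M}\oplus\mathbf{X}$ is rigid, and maximality of $\mathbf{M}$ forces $\mathbf{X}\in\mathrm{add}\,\mathbf{M}$, so $X\in\mathrm{add}\,\varphi(\mathbf{M})$, a contradiction. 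The reverse inclusion $\widehat{A}\subseteq\varphi(\mathbf{A})$ will fall out of the explicit construction in part (3).

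The main obstacle is part (3). Fix $[M]\in\widehat{A}$ with canonical big lift $[\mathbf{M}_{\mathrm{b}}]\in\mathbf{B}$; any $[\mathbf{M}]\in\varphi_{\mathbf{A}}^{-1}([M])$ decomposes as $\mathbf{M}_{\mathrm{b}}\oplus\mathbf{M}_{\mathrm{s}}$, and the type $\alpha$ conditions force the small part $\mathbf{M}_{\mathrm{s}}$ to take, over each open interval $(a_s,a_{s+1})$, one of the four shape types
$\bigoplus_{c\in(a_s,a_{s+1})}(\mathbf{T}_{[c,d]}\oplus\mathbf{T}_{(c,d]})$, $\bigoplus_{c}(\mathbf{T}_{[c,d)}\oplus\mathbf{T}_{(c,d)})$, $\bigoplus_{c}(\mathbf{T}_{[d,c]}\oplus\mathbf{T}_{[d,c)})$, $\bigoplus_{c}(\mathbf{T}_{(d,c]}\oplus\mathbf{T}_{(d,c)})$, for a single $d\in\{a_0,\ldots,a_n\}$ on the appropriate side. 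Thus $|\varphi_{\mathbf{A}}^{-1}([M])|=\prod_{s=0}^{n-1}\#\{\text{valid choices at step }s\}$, and I will show that each factor equals $2$. The argument is entirely local at each $s$: the two big summands of $\mathbf{M}_{\mathrm{b}}$ that touch $a_s$ and $a_{s+1}$ in $\widehat{Q}$ dictate the open/closed ``shapes'' of the big part on either side of $(a_s,a_{s+1})$, and a short enumeration via Lemma \ref{lemma2-2} shows that exactly two of the four shape types are simultaneously compatible with these shapes while producing a maximal rigid extension (the value of $d$ is pinned down by compatibility with the big part on the chosen side). Multiplying over the $n$ steps yields $|\varphi_{\mathbf{A}}^{-1}([M])|=2^n$ and simultaneously produces an explicit preimage in $\mathbf{A}$ above every $[M]\in\widehat{A}$, completing part (2) as well.
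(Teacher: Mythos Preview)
Your proposal is correct and follows essentially the same approach as the paper's proof: both establish (1) via the bijection $\tau_2$ on indecomposables and surjectivity of $\tau_1$, and both handle (3) by splitting a preimage into a uniquely determined ``big'' part $\mathbf{T}$ and a ``small'' part with exactly two admissible choices on each subinterval $(a_j,a_{j+1})$. Your treatment of (2) via lifting $X$ to $\mathbf{X}$ and checking compatibility is more explicit than the paper's one-line ``this is direct,'' and your framing of (3) as ``four shape types, exactly two valid'' is a mild repackaging of the paper's ``one right-pointing option, one left-pointing option,'' but the underlying argument is the same.
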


\begin{proof}
(1) Since $\tau_2$ gives a bijection between the sets of isomorphsim classes of indecomposable representations in $\widehat{B}$ and $\widetilde{B}$, it is a bijection between $\widehat{B}$ and $\widetilde{B}$.
It is direct that $\tau_1$ is surjective by the definition.
Hence, the map $\varphi$ is surjective. 

(2) Since $\mathbf{A}$ and $\widehat{A}$ are both defined by maximal conditions, this is direct.

(3)  Let $\mathbf{M}$ be a representation of $\mathbf{Q}$ such that $[\mathbf{M}]\in\mathbf{A}$ and $\varphi([\mathbf{M}])=[M]$. Then $$\mathbf{M}\cong\mathbf{T}\oplus\bigoplus_{j}\bigoplus_{x\in(a_j,a_{j+1})}
  \mathbf{T}_{x},$$
  where $$\mathbf{T}=\bigoplus_{s\leq t}\mathbf{T}^{\oplus t_{s,t}}_{|a_s,a_t|},$$
  and
  $$\mathbf{T}_{x}=\mathbf{T}_{x,1}=\bigoplus_{s\geq j+1}(\mathbf{T}_{[x,a_s]}\oplus\mathbf{T}_{(x,a_s]})^{\oplus t_{x,s}}\oplus\bigoplus_{s\geq j+1}(\mathbf{T}_{[x,a_s)}\oplus\mathbf{T}_{(x,a_s)})^{\oplus t'_{x,s}},$$
  or
  $$\mathbf{T}_{x}=\mathbf{T}_{x,2}=\bigoplus_{s\leq j}(\mathbf{T}_{[a_s,x]}\oplus\mathbf{T}_{[a_s,x)})^{\oplus t_{x,s}}\oplus\bigoplus_{s\leq j}(\mathbf{T}_{(a_s,x]}\oplus\mathbf{T}_{(a_s,x)})^{\oplus t'_{x,s}},$$
 such that $$\sum_{0\leq s\leq n}(t_{x,s}+t'_{x,s})=1.$$

  Note that $\mathbf{T}$ is determinated by $[M]$ uniquely, $\varphi([\mathbf{T}])=[M]$ and $\varphi([\mathbf{T}_{x}])=0$.

  Let $X=\{|a_s,a_t||t_{s,t}>0\}$. Fix $x\in(a_j,a_{j+1})$ for some $j$. 
  For any $j+1\leq l\leq m$, if $[x,a_l|$ and $[x,a_m|$ are two different intervals compatible with all the interval in $X$, then $[a_{j+1},a_m|$ is compatible with all the interval in $X$. Hence, the maximal condition implies that
  $$\mathbf{T}_{x,1}=\mathbf{T}_{[x,a_s|}\oplus\mathbf{T}_{(x,a_s|}$$
  for some $s$. Similarly, $$\mathbf{T}_{x,2}=\mathbf{T}_{|a_{s'},x]}\oplus\mathbf{T}_{|a_{s'},x)}$$
  for some $s'$.
  Hence $\mathbf{T}_{x}$ has two choices.
  Note that $t_{x,s}=t_{y,s}$ and $t'_{x,s}=t'_{y,s}$ for $x,y\in(a_j,a_{j+1})$.
  Hence $|{\varphi}_{\mathbf{A}}^{-1}([M])|= 2^{n}.$
\end{proof}






\begin{example}
Let $\alpha=(0,1)$.
In Example \ref{example_1}, we have $$\mathbf{A}=\{[\mathbf{M}_i]|i=1,2,\ldots,10\}.$$

Now, $\widehat{Q}$ is the following quiver
$$\qquad  \bullet \;  \longrightarrow \; \bullet  \; \longrightarrow \;  \bullet$$
$$   \qquad a_0 \; \qquad a_{01} \; \; \;  \; \quad a_1$$ 
and  
$$\widehat{A}=\{[{M}_i]|i=1,2,\ldots,5\},$$
where
    $${M}_1= T_{[a_{0},a_{1}]} \oplus T_{[a_{1},a_{1}]} \oplus T_{[a_{01},a_{1}]},$$
   $${M}_2= T_{[a_{0},a_{1}]} \oplus T_{[a_{01},a_{01}]} \oplus T_{[a_{01},a_{1}]},$$ 
   $${M}_3= T_{a_{0},a_{1}]} \oplus T_{[a_{01},a_{01}]} \oplus T_{[a_{0},a_{01}]},$$ 
   $${M}_4= T_{[a_{0},a_{0}]} \oplus T_{[a_{1},a_{1}]} \oplus T_{[a_{0},a_{1}]},$$ 
   $${M}_5= T_{[a_{0},a_{0}]} \oplus T_{[a_{0},a_{01}]} \oplus T_{[a_{0},a_{1}]}.$$

The map $\varphi: \mathbf{A} \to  \widehat{A}$ satisfies $\varphi(\mathbf{M}_{2k})= \varphi(\mathbf{M}_{2k-1})=M_k$
for $k=1,2,...,5$. Hence, $|{\varphi}_{\mathbf{A}}^{-1}(M)|=2$, for any $M\in\widehat{A}$,
which coincides with Lemma \ref{lemma_4.1}.    
\end{example}

Then we shall give a proof of Theorem \ref{main-result}.
\begin{proof}
Corollary \ref{cor_3.3}  implies that
$$|\widehat{A}|=\frac{1}{2n+2} \binom{4n+2}{2n+1}.$$
Hence, we have $$|\mathbf{A}|=2^{n}|\widehat{A}|=\frac{2^{n-1}}{n+1}\binom{4n+2}{2n+1}$$
by Lemma \ref{lemma_4.1}.
\end{proof}

\bibliography{bibfile}

\begin{thebibliography}{10}

\bibitem{Appel_Sala2020}
A.~Appel and F.~Sala.
\newblock Quantization of continuum {K}ac–{M}oody algebras.
\newblock {\em Pure and Applied Mathematics Quarterly}, 16(3):439--493, 2020.

\bibitem{Appel_Sala_Schiffmann2022}
A.~Appel, F.~Sala, and O.~Schiffmann.
\newblock Continuum {K}ac–{M}oody algebras.
\newblock {\em Moscow Mathematical Journal}, 22(2):177--224, 2022.

\bibitem{assem2006elements}
I.~Assem, D.~Simson, and A.~Skowronski.
\newblock {\em Elements of the representation theory of associative algebras}.
\newblock Cambridge University Press, 2006.

\bibitem{bongartz1982covering}
K.~Bongartz and P.~Gabriel.
\newblock Covering spaces in representation-theory.
\newblock {\em Inventiones Mathematicae}, 65:331--378, 1982.

\bibitem{botnan2017interval}
M.~Botnan.
\newblock Interval decomposition of infinite zigzag persistence modules.
\newblock {\em Proceedings of the American Mathematical Society}, 145(8):3571--3577, 2017.

\bibitem{brenner2006generalizations}
S.~Brenner and M.~C. Butler.
\newblock Generalizations of the {B}ernstein-{G}elfand-{P}onomarev reflection functors.
\newblock In {\em Representation Theory II: Proceedings of the Second International Conference on Representations of Algebras Ottawa, Carleton University, August 13--25, 1979}, pages 103--169. Springer, 2006.

\bibitem{buan2004tilting}
A.~B. Buan and H.~Krause.
\newblock Tilting and cotilting for quivers of type ${A}_n$.
\newblock {\em Journal of Pure and Applied Algebra}, 190(1-3):1--21, 2004.

\bibitem{carlsson2010zigzag}
G.~Carlsson and V.~De~Silva.
\newblock Zigzag persistence.
\newblock {\em Foundations of Computational Mathematics}, 10:367--405, 2010.

\bibitem{crawley2015decomposition}
W.~Crawley-Boevey.
\newblock Decomposition of pointwise finite-dimensional persistence modules.
\newblock {\em Journal of Algebra and its Applications}, 14(05):1550066, 2015.

\bibitem{gabriel1972unzerlegbare}
P.~Gabriel.
\newblock Unzerlegbare darstellungen {I}.
\newblock {\em Manuscripta Mathematica}, 6:71--103, 1972.

\bibitem{gabriel1973indecomposable}
P.~Gabriel.
\newblock Indecomposable representations {II}.
\newblock {\em Sympos. Math., Convegno di Algebra Commutativa, INDAM, Roma, 1971}, pages 81--104, 1973.

\bibitem{happel1982tilted}
D.~Happel and C.~M. Ringel.
\newblock Tilted algebras.
\newblock {\em Transactions of the American Mathematical Society}, 274(2):399--443, 1982.

\bibitem{igusa2023continuous}
K.~Igusa, J.~D. Rock, and G.~Todorov.
\newblock Continuous quivers of type {A} ({I}) foundations.
\newblock {\em Rendiconti del Circolo Matematico di Palermo Series 2}, 72(2):833--868, 2023.

\bibitem{oudot2017persistence}
S.~Y. Oudot.
\newblock {\em Persistence theory: from quiver representations to data analysis}.
\newblock American Mathematical Society, 2017.

\bibitem{Sala_Schiffmann2019}
F.~Sala and O.~Schiffmann.
\newblock The circle quantum group and the infinite root stack of a curve.
\newblock {\em Selecta Mathematica}, 25(77):1--86, 2019.

\bibitem{Sala_Schiffmann2021}
F.~Sala and O.~Schiffmann.
\newblock Fock space representation of the circle quantum group.
\newblock {\em International Mathematics Research Notices}, 2021(22):17025--17070, 2021.

\end{thebibliography}

\end{document}